\newtheorem{thm}{Theorem}
\newtheorem{coro}{Corollary}
\let\paragraph\subsection
\title{Spectral monotonicity of the Hodge Laplacian} 
\author{Oliver Knill}
\date{April 2, 2023}
\address{Department of Mathematics \\ Harvard University \\ Cambridge, MA, 02138 }
\subjclass{}
\keywords{Spectral theory, Hodge Operator, Simplicial Complex}
\begin{document}
\maketitle

\begin{abstract}
If $K \subset G$ are finite abstract simplicial complexes,
then the eigenvalues of the Hodge Laplacians satisfy 
$\lambda_k(K) \leq \lambda_k(G)$ if padded left. 
\end{abstract}

\section{The theorem}

\paragraph{}
If $d$ is the {\bf exterior derivative} of a {\bf finite abstract simplicial complex} $G$
of $n$ elements, then $L=D^2=(d+d^*)^2=d d^* + d^* d$ is the {\bf Hodge Laplacian} of $G$.
It is a block diagonal $n \times n$ matrix in which the blocks are the
{\bf k-form Laplacians} $L_k = d_k^* d_k + d_{k-1} d_{k-1}^*$ of size $f_k \times f_k$, where
$f=(f_0,f_1,\dots,f_d)$ is the {\bf $f$-vector} of $G$, counting the number $f_k$ of elements in $G$ of 
cardinality $k+1$. Ordering the elements in $x$ and the position in the list $G$ is a choice of coordinates 
and fixes the matrices. The Betti vector $\vec{b} = (b_0,b_1, \dots )$ with {\bf Betti numbers} 
$b_k = {\rm dim}({\rm ker}(L_k))$ does not depend on the ordering. 

\paragraph{}
The symmetric $n \times n$ matrix $L$ is positive semi-definite 
because $L=D^2$ with a symmetric matrix $D$. The {\bf Dirac matrix} $D=d+d^*$
has pairs of positive and negative eigenvalues and the diagonal matrix 
$P(x)=(-1)^{{\rm dim}(x)}$ maps an eigenvector $v$ to the eigenvalue $\lambda$
to $P v$ which is an eigenvector to the eigenvalue $-\lambda$. 
So, also the non-zero eigenvalues of $L$ come in pairs. But more is true: the 
{\bf McKean-Singer symmetry} \cite{McKeanSinger} shows that
the non-zero eigenvalues are distributed equally on even and odd forms. This is 
usually written as an equality for the Euler characteristic ${\rm str}(e^{-t L})={\rm str}(1)$, 
where ${\rm str}(L)= \sum_x (-1)^{{\rm dim}(x)} L_{xx}$. 
We write the eigenvalues $\lambda_k(G)$ of $L$ in ascending order. 

\paragraph{}
If $K$ is a sub-simplicial complex of $G$ with 
$m \leq n$ elements, define $\lambda_{k}(K)=0$ for $k \leq n-m$ and 
$\lambda_{n-m+k}(K) = \mu_k(K)$, where $\mu_k$ are the $m$ eigenvalues of the 
Hodge Laplacian of $K$, again ordered in {\bf ascending order}. The spectra of
$K$ and $G$ can now be compared, when seen as {\bf left-padded} non-descending sequences.
There are $m$ eigenvalues in $K$ and $n$ eigenvalues in $G$ but after padding, the 
sequences both have $n$ elements. Different padding had us miss at first the
following result: 

\begin{thm}[Spectral monotonicity]
\label{1}
$\lambda_j(K) \leq \lambda_j(G)$ for all $j \leq n$.
\end{thm}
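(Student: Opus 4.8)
The plan is to reduce the statement to a single clean fact about the \textbf{Dirac matrix} rather than the Hodge Laplacian itself. The decisive structural observation is that, once the global ordering of the simplices of $G$ is fixed and $S$ denotes the index set of the simplices belonging to $K$, the matrix $D_K$ is exactly the \emph{principal submatrix} $D_G[S,S]$ of $D_G$. This holds because the incidence value between two simplices $x \subset y$ (the sign entering $d$ and $d^*$) depends only on $x$ and $y$ as sets and not on the ambient complex, so every entry indexed by two simplices of $K$ agrees in $D_K$ and in $D_G$. Note that one cannot argue with $L$ directly: since $L=D^2$, the Hodge Laplacian of $K$ equals $D_K^2$, the square of a submatrix, whereas the principal submatrix $(D_G^2)[S,S]$ of the Laplacian of $G$ is the submatrix of a square, and these two matrices differ in general. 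Passing through $D$ is therefore essential.

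Next I would translate the left-padded claim into a statement about the absolute values of the eigenvalues of $D$. Write $|\delta|_1 \le \cdots \le |\delta|_n$ for the absolute values of the eigenvalues of $D_G$ and $|\gamma|_1 \le \cdots \le |\gamma|_m$ for those of $D_K$. Since the eigenvalues of $D^2$ are the squares of those of $D$ and squaring is monotone on $[0,\infty)$, the ascending spectrum of $L=D^2$ is obtained by squaring, so $\lambda_j(G) = |\delta|_j^2$ and $\mu_k(K) = |\gamma|_k^2$. For the padded indices $j \le n-m$ the inequality $\lambda_j(K) = 0 \le \lambda_j(G)$ is immediate from positive semi-definiteness of $L$. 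For $j = n-m+k$ with $1 \le k \le m$ the claim $\mu_k(K) \le \lambda_{n-m+k}(G)$ becomes, after taking square roots, the absolute-value interlacing
\[
|\gamma|_k \le |\delta|_{n-m+k}, \qquad k=1,\dots,m .
\]
Thus it suffices to prove this one family of inequalities, and I observe that left-padding by exactly $n-m$ zeros is precisely what makes the index shift match the size difference of the two matrices.

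Finally I would establish the interlacing through counting functions. For a symmetric matrix $A$ and $t \ge 0$ set $N_A(t) = \#\{\text{eigenvalues } \lambda \text{ of } A \text{ with } |\lambda| \le t\}$. A routine threshold argument shows that the inequalities $|\gamma|_k \le |\delta|_{n-m+k}$ for all $k$ are equivalent to
\[
N_{D_K}(t) \ge N_{D_G}(t) - (n-m) \quad \text{for all } t \ge 0 .
\]
To prove the latter I would use the Courant-Fischer (min-max) description: the number of eigenvalues of $A$ satisfying $\lambda > c$ is the maximal dimension of a subspace on which $\langle A v, v\rangle > c \|v\|^2$, and similarly for the reversed inequalities. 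Since $D_K = D_G[S,S]$, the coordinate inclusion $\mathbb{R}^S \hookrightarrow \mathbb{R}^n$ carries any such subspace to one of the same dimension on which the quadratic form $\langle D_G \cdot, \cdot\rangle$ takes identical values, so every one-sided count for $D_K$ is bounded by the corresponding count for $D_G$. Applying this to the tail $\{\lambda > t\}$ directly and to $\{\lambda < -t\}$ together with a complementation (which introduces the deficit $n-m$), and subtracting, yields the displayed inequality. The step I expect to be the main obstacle is exactly this one: ordinary Cauchy interlacing controls the \emph{signed} eigenvalues of a principal submatrix, but the quantity to be bounded is the non-monotone absolute value $|\lambda|$, so both spectral tails of $D$ must be handled simultaneously. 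Here the symmetry of the Dirac spectrum about $0$ induced by $P(x)=(-1)^{\dim(x)}$ is helpful, since it makes the two tails mirror images and keeps the bookkeeping clean.
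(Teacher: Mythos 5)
Your proof is correct, but it takes a genuinely different --- and more robust --- route than the paper. The paper's primary argument claims that the zero-padded $L(K)$ satisfies $L(K)\le L(G)$ in the Loewner order because ``adding new maximal simplices only can increase'' the quadratic form $\|Du\|^2$, and then compares min--max values over the same family of subspaces. Your caveat that one cannot argue with $L$ directly is exactly right, and in fact that Loewner inequality can fail: for the path $G=\{\{1\},\{2\},\{3\},\{1,2\},\{2,3\}\}$ and $K=G\setminus\{\{2,3\}\}$ (removal of one locally maximal simplex, i.e.\ the paper's inductive step), the $1$-form block of $L(G)-L(K)$ is $\begin{pmatrix}0&-1\\-1&2\end{pmatrix}$, which is not positive semi-definite; concretely, for the $1$-form $u$ with $u(\{1,2\})=1$, $u(\{2,3\})=\tfrac12$ one has $\|D_Ku\|^2=2>\tfrac32=\|D_Gu\|^2$, because the new simplex changes existing coordinates of $Du$ as well as adding new ones. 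What is true is only that the principal $K\times K$ submatrix of $L(G)$ dominates $L(K)$, and turning that into an eigenvalue inequality forces the index shift $k\mapsto k+(n-m)$ of Cauchy interlacing --- precisely the shift your left-padding encodes. The paper's closing sentence does invoke interlacing for the Dirac matrices, which is your starting point, but it stops short of the obstacle you correctly isolate: interlacing controls the signed eigenvalues of $D_K=D_G[S,S]$, while the ascending spectrum of $L=D^2$ is governed by $|\lambda|$, which is not monotone in $\lambda$. Your two-tail counting argument (each of $\#\{\lambda>t\}$ and $\#\{\lambda<-t\}$ is monotone under passage to a principal submatrix; sum and complement to get $N_{D_K}(t)\ge N_{D_G}(t)-(n-m)$) closes this gap cleanly; equivalently, the $|\delta|_j$ are the singular values of $D$, and singular values of a submatrix are dominated index by index in descending order by those of the full matrix. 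The only point worth making explicit in your write-up is that the identity $D_K=D_G[S,S]$ relies on the incidence signs being computed from a fixed total order on the vertex set, which the paper's setup does provide.
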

\begin{proof}
The proof parallels the case of the Kirchhoff Laplacian $L_0$ \cite{Spielman2009}. 
$G$ is a finite set of non-empty sets closed under the operation of taking finite non-empty subsets.
A set $x \in G$ is called {\bf locally maximal} if it is not contained in an other
simplex. This means that the set $U=\{ x \}$ is an open set in the non-Hausdorff
{\bf Alexandroff topology} $\mathcal{O}$ on $G$ generated by the basis formed by the 
{\bf stars} $U(x) = \{ y \in G, x \subset y \}$. 
If we add a locally maximal simplex to a given complex, the spectrum changes monotonically.
Also the Hodge Laplacian $L(U)= D(U)^2$ is the square of the Dirac operator $D(U)$ of $U$.
If $K \subset G$, then $L(K) \leq L(G)$ in the {\bf Loewner partial order}. One can see this as follows:
if $u: G \to \mathbb{R}$ is a vector, then the Laplacian quadratic form is
is $\langle u, L u \rangle = \langle u, D^2 u \rangle = \langle Du, Du \rangle = ||Du||^2$
and adding new maximal simplices only can increases this quadratic form, also when restricted to any
linear subspace. The {\bf Courant Fischer theorem} then gives using 
$\mathcal{S}_k = \{ V \subset \mathbb{R}^n, {\rm dim}(V)=k \}$
$$ \lambda_k(K) = {\rm min}_{V \in \mathcal{S}_k} {\rm max}_{|u|=1, u \in V} \langle u, L(K) u \rangle
 \leq {\rm min}_{V \in \mathcal{S}_k} {\rm max}_{|u|=1, u \in V} \langle u, L(G) u \rangle 
  = \lambda_k(G) \; . $$
One can see this also in the context of the {\bf interlace theorem} applied to $D$ as the Dirac matrix of $K$
is obtained from the Dirac matrix of $L$ by deleting the row and column belonging to the element $x$ which 
was added. The eigenvalues of the Dirac matrix $D_K$ of $K$ are now interlacing the eigenvalues of
the Dirac matrix $D_G$ of $G$.
\end{proof} 

\paragraph{}
A similar spectral inequality holds also for open subsets of $G$:
$\lambda_j(U) \leq \lambda_j(G)$ for all $j \leq n$. The argument is the same.

\paragraph{}
As we can write down the complex $G$ in a lexicographic order (one an think of $x \in G$ as 
a ``word", then sort the elements like in a dictionary with shorter words coming first), 
where the {\bf vertex set} $V=\bigcup_{x \in G} x$ is totally ordered and ${\rm dim}(x)$ produces 
a partial order, then the last element in the list is always locally maximal. 
The sequence of subsets $G_k = \{ x_1, \dots, x_k \}$ subset $G=\{x_1, \dots, x_n \}$ is now
a {\bf Morse filtration}. The step $G_k \to G_{k+1}$ augments
$f_{{\rm dim}(x_{k+1})}$ by one and either increases $b_{{\rm dim}(x_{k+1})}$ or decreases
$b_{{\rm dim}(x_{k+1})-1}$ assuring that the super sums $\sum_k (-1)^k f_k$ and $\sum_k (-1)^k b_k$
always stay the same. As every new $k$-simplex is glued naturally to a $(k-1)$-dimensional sphere,
this also illustrates that every simplicial complex is naturally an {\bf finite abstract CW complex}. 
(we made use of this structure in \cite{KnillEnergy2020}. Like finite abstract simplicial complexes,
there is never any geometric realization involved).
The CW structure of $G$ contains more information than $G$ as it also uses an order on $G$. 
There are lots of different CW realizations of a given simplicial complex.  
Before each extension $G \to G \cup \{x\}$ we can for example reshuffle the total order on 
$V=\bigcup_{x \in G} x$. One can also see the CW complex in terms of a Morse function $f$
which enumerates the elements of $G$ such that $\{ f \leq k \}=G_k$. 

\paragraph{}
A closed sub complex $K \subset G$ of $G$ can be seen as a {\bf closed set} in the finite topology 
$\mathcal{O}$ generated by the stars. The complement $U$ is open. 
The exterior derivative can also be defined for the open set $U=G \setminus K$. 
Having a {\bf cohomology for open sets} is a
new feature in the discrete. Unlike the case when simplicial complexes are realized in the continuum 
the topology $\mathcal{O}$ is non-Hausdorff. It is Zariski type because closed subsets are 
sub-simplicial complexes. The ability to split spaces $G$ into an open set $U$ and a closed set $K$
gives a new approach to Meyer-Vietoris as we have no overlap. We will write about
this elewhere. 

\paragraph{}
For any open or closed set and Hodge Laplacian $L$, we have a 
{\bf Betti vector} $\vec{b}=(b_0,b_1,\dots,b_d)$ where $b_k$ is the nullity of the $k$'th block
in $L=L_0 \oplus L_1 \oplus \dots L_d$. We also have the {\bf $f$-vector} $\vec{f} = (f_0,f_1,\dots,f_d)$
where the integers $f_k$ count the number of $k$-dimensional parts in the complex. 
The {\bf Euler-Poincar\'e formula}
$\chi = \sum_k (-1)^k f_k = \sum_k (-1)^k b_k$ holds both for open and closed sets. 
Note that in the case of open sets, we sometimes have to deal with 
{\bf Hodge blocks} that are $0 \times 0$ matrices which have empty spectrum. 

\paragraph{}
We can easily see for example that every {\bf $f$-vector} $\vec{f} \geq 0$ 
and every {\bf Betti vector} $\vec{b} \geq 0$ can be realized by open sets. For closed sets, 
meaning sub-simplicial complexes, we do not know to characterized the set of possible Betti vectors 
but for the $f$-vectors, there is the {\bf Kruskal-Katona} characterization. 
In order to realize a given $f$-vector $\vec{f} \geq 0$ with an open set, just take a disjoint union $U$ of 
non-empty sets with $f_k$ sets of cardinality $k+1$, then $\vec{b}(U)=\vec{f}(U)$ agrees with the 
pre-described $\vec{f}$ and in this case even $\vec{b}=\vec{f}$. The open set
$U=\{ \{0\},\{1\},\{2\}, \{3,4 \}, \{5,6,7,8\}, \{9,10,11,12 \} \}$ for example realizes 
$\vec{f} = (3,1,0,2)$. The Hodge Laplacian is a $6 \times 6$ matrix with $4$ blocks.
$L_0$ is the $3 \times 3$ zero matrix, $L_1$ is a $1 \times 1$ zero matrix, $L_2$ is the $0 \times 0$ matrix 
and $L_3$ is the $2 \times 2$ zero matrix. 

\paragraph{}
One can now compare the spectrum of $L_{U,K}=L_U \oplus L_K$ in which $U$ and $K$ are 
disjoint and the spectrum of $L_G$,
where the two sets are united. Both matrices $L_{U,K}=L_U \oplus L_K$ and $L_G$ are symmetric
$n \times n$ matrices.  While $\sigma(L_{U,K}) \leq \sigma(L_G)$ is not true in general, we will
see elsewhere that after a {\bf Witten deformation} $d_s=e^{-gs} d e^{gs}$ with a suitable function 
$g:G \to \{0,1\}$ of the exterior derivatives $d_G$ and $d_{U,K} = d_U \oplus d_K$ we can enforce 
this spectral inequality. The Witten deformation does not change the dimensions of the kernels of $L_k$
and so preserves the $0$ spectrum. It can deform however the other eigenvalues. 
Experimentally, we see $\sigma(L_{U,K}) \leq 2\sigma(L_G)$ but this has not been proven.  

\paragraph{}
By choosing the function $g$ supported on the interface set of $K$ and $U$ we can change the focus. 
For large enough $s$, we get then $\sigma(L_{U,K}^{(s)}) \leq \sigma(L_G^{(s)})$. Since the $0$ eigenvalues
do not change under Witten deformation, this especially will establish the {\bf fusion inequality} 
$\vec{b}_U + \vec{b}_K  \geq \vec{b}_G$, reflecting that during a {\bf fusion} of an open and closed set,
new harmonic forms could be generated but that no harmonic forms are lost. In this note we 
leave it at announcing this inequality and hope to discuss it more in a future article. 

\paragraph{}
The monotonicity Theorem~\ref{1} shows that in the case when $U=\{ x\}$ is a open set 
with a single point $x$, where the Hodge Laplacian of $U$ is a $1 \times 1$ matrix  and 
$\sigma(L_U)=\{0\}$ we have $\sigma(L_{U,K}) \leq \sigma(L_G)$. 
For open sets like singleton sets $U=\{x\}$, the k-form
Laplacians with ${\rm dim}(x) \neq k$ are all $1 \times 1$ zero matrices. 

\paragraph{}
A small example, where the fusion equality is an equality
$\vec{b}_U + \vec{b}_K  = \vec{b}_G$ is where a closed interval $K=\{ \{1\},\{2\},\{1,2\} \}$, and 
an open interval $U=\{ \{3\},\{4\},\{2,3\},\{3,4\},\{4,1\} \}$ are merged to a circle 
$G=\{ \{1\},\{2\},\{3\},\{4\}$, $\{1,2\},\{2,3\},\{3,4\},\{4,1\} \}$, which gives $\vec{b}(K)=(1,0)$, 
$\vec{b}(U)=(0,1)$ and $\vec{b}(G)=(1,1)$. This generalizes to arbitrary dimensions: 
a {\bf $d$-sphere} $G$ is the union of an open $d$-ball $U$ with 
$\vec{b}(U) = (0, \cdots,0,1)$ and a closed $d$-ball $K$ with $\vec{b}(K)=(1,0, \dots, 0)$ 
adding to $\vec{b}(G)=(1,0,\dots,0,1)$. 

\paragraph{}
A small example, where the fusion inequality is a strict inequality
$\vec{b}_U + \vec{b}_K > \vec{b}_G$ is with $K=\{ \{1\},\{2\} \}$ and $U=\{ \{1,2\} \}$ which is 
merged to $G=\{ \{1\},\{2\},\{1,2\} \}$ and where $\vec{b}(K)=(2,0)$ and $\vec{b}(U)=(0,1)$ and
$\vec{b}(G) = (1,0)$. We have here $\sigma(L_{K,U}) = \{ 0, 0, 0 \}$
%We have $L_G = \left[ \begin{array}{ccc} 1 & -1 & 0 \\ -1 & 1 & 0 \\ 0 & 0 & 2 \\ \end{array} \right]$ 
and $\sigma(G) = \{ 0,2,2 \}$. A harmonic 1-form and a harmonic 0-form have merged. 

\paragraph{}
The inequality in the theorem holds on every sector of $k$-forms. 
The spectrum of the $k$-form Laplacian only can change if we add a $(k-1),k$ or $(k+1)$-dimensional 
simplex. In any case, it is good to state the monotonicity restricted to each of the forms 
Laplacians:

\begin{coro}[Form monotonicity]
$\lambda_j(L_{k,K}) \leq \lambda_j(L_{k,G})$ for all $j \leq n$ and $0 \leq k \leq {\rm dim}(G)$. 
\end{coro}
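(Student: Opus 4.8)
The plan is to prove this sharper, sector-by-sector statement directly. By the Morse filtration of the preceding paragraphs (order the simplices of $G \setminus K$ by increasing dimension) it suffices to treat a single step $G' = K' \cup \{x\}$, in which $K' \subset G'$ are subcomplexes differing by one locally maximal simplex $x$ of dimension $p$; the general case follows by composing along $K = K_0 \subset K_1 \subset \cdots \subset K_N = G$. Because $x$ is maximal nothing lies above it, so $d_p e_x = 0$ and the new column of $d_p$ vanishes. Consequently every block $L_k$ with $k \notin \{p-1,p\}$ is literally unchanged — in particular $L_{p+1}$, since the vanishing column leaves both $d_p d_p^{*}$ and $\Omega^{p+1}$ untouched — so $\lambda_j(L_{k,K'}) = \lambda_j(L_{k,G'})$ there with no padding. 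Only the sectors $p-1$ and $p$ move, and the crux is that they demand genuinely different tools.

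In the receiving sector $p-1$ the operator $d_{p-1}$ gains a single new row, the facet-incidence covector $w^{\top}$ of $x$; hence $(d_{p-1}^{G'})^{*} d_{p-1}^{G'} = (d_{p-1}^{K'})^{*} d_{p-1}^{K'} + w w^{\top}$ while $d_{p-2} d_{p-2}^{*}$ is untouched, giving $L_{p-1,G'} = L_{p-1,K'} + w w^{\top}$. These matrices have equal size and the rank-one bump $w w^{\top}$ is positive semidefinite, so $L_{p-1,K'} \preceq L_{p-1,G'}$ in the Loewner order and Courant--Fischer gives the inequality, exactly as in the Kirchhoff case $L_0$. In the new sector $p$ the identity $d_p e_x = 0$ forces the block form
\[ L_{p,G'} = \begin{pmatrix} L_{p,K'} & c \\ c^{\top} & \|w\|^2 \end{pmatrix}, \qquad c = d_{p-1}^{K'}\, w, \]
so $L_{p,K'}$ is exactly the principal submatrix of $L_{p,G'}$ obtained by deleting the row and column of $x$. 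Cauchy interlacing then yields $\lambda_i(L_{p,G'}) \le \lambda_i(L_{p,K'}) \le \lambda_{i+1}(L_{p,G'})$, and left-padding the smaller spectrum by its single new zero upgrades these to $\lambda_j(L_{p,K'}) \le \lambda_j(L_{p,G'})$ for every index.

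To finish I would compose the one-simplex steps. For a fixed sector $k$ every step produces the left-padded pointwise domination of the two spectra, and this relation is transitive: encoding a spectrum by its counting function $n(t) = \#\{\lambda \le t\}$, a left-pad by $r$ zeros simply adds the constant $r$ to $n$ on $\{t \ge 0\}$, so the per-step inequalities accumulate into $\lambda_j(L_{k,K}) \le \lambda_j(L_{k,G})$ once everything is padded to the final length $f_k(G)$.

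The step I expect to be the main obstacle is sector $p$, and precisely the recognition that the ambient Loewner inequality powering the $L_0$ argument does \emph{not} survive for $k \ge 1$: the zero-padded $L_{p,K'}$ is in general \emph{not} $\preceq L_{p,G'}$, because the coupling $c = d_{p-1}^{K'} w$ is typically nonzero and then the difference $\bigl(\begin{smallmatrix} 0 & c \\ c^{\top} & \|w\|^2 \end{smallmatrix}\bigr)$ is indefinite. It is exactly the maximality of $x$ — through $d_p e_x = 0$ and the resulting clean principal-submatrix embedding — that replaces the failed Loewner comparison by Cauchy interlacing and rescues the monotonicity in the top sector.
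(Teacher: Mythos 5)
Your proof is correct, and it takes a genuinely different route from the paper. The paper does not give the corollary a separate argument: it asserts that the proof of Theorem~\ref{1} (the Loewner comparison $L(K)\leq L(G)$ followed by Courant--Fischer, or alternatively interlacing for the full Dirac matrix) simply restricts to each $k$-form block. You instead break $K\subset G$ into single maximal-simplex steps and use two different tools in the two affected sectors: a same-size rank-one positive semidefinite update $L_{p-1,G'}=L_{p-1,K'}+ww^{\top}$ handled by Courant--Fischer in the facet sector, and a principal-submatrix embedding handled by Cauchy interlacing (plus one left-pad by $0$, legitimate since $L\succeq 0$) in the sector of the new simplex. Your closing remark is the substantive payoff rather than a side comment: the zero-padded $L_{p,K'}$ is indeed in general \emph{not} $\preceq L_{p,G'}$, because the coupling $c=d_{p-1}^{K'}w$ makes the difference indefinite. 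A minimal instance is $K=\{\{1\},\{2\},\{3\},\{1,2\}\}$, $G=K\cup\{\{1,3\}\}$, where $L_{1,G}=\left(\begin{smallmatrix}2&1\\1&2\end{smallmatrix}\right)$ and the padded $L_{1,K}=\left(\begin{smallmatrix}2&0\\0&0\end{smallmatrix}\right)$ differ by $\left(\begin{smallmatrix}0&1\\1&2\end{smallmatrix}\right)$, which has a negative eigenvalue, yet the padded spectra $\{0,2\}$ and $\{1,3\}$ still compare. This shows that the ``quadratic form only increases'' step in the paper's proof does not survive blockwise (and, since the Hodge Laplacian is the direct sum of the blocks, not even globally in that example), so the interlacing mechanism you isolate is what actually carries the sectorwise statement; it parallels the paper's alternative Dirac-interlacing remark but applies it where it is really needed. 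Your counting-function argument for merging the padded per-step dominations is also the clean way to see that the sectorwise corollary implies Theorem~\ref{1} itself, so your route proves both statements at once.
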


\paragraph{}
Seen as such, the result generalizes the result for the Kirchhoff Laplacian $L_0$, 
usually formulated within graph theory \cite{Spielman2009} and which is based on
spectral monotonicity \cite{HornJohnson2012}.
If $G=(V,F)$ is a subgraph of $K=(V,E)$, then the eigenvalues of the
Kirchhoff Laplacian satisfy $\lambda_k(K) \leq \lambda_k(G)$.
For $1$-dimensional simplicial complex, the $1$-form Laplacian
$L_1=d_0 d_0^*$ is essentially isospectral to $L_0 = d_0^* d_0$. In general,
the {\bf McKean-Singer symmetry} assures that $\bigcup_{k {\rm even}} L_k$ and 
$\bigcup_{k {\rm odd}} L_k$ are essentially isospectral meaning that they have 
the same non-zero eigenvalues. 

\paragraph{}
We made use of the McKean-Singer symmetry to show that the Hodge spectrum
can not determine the simplicial complex \cite{knillmckeansinger} in general. This is not
surprising given that in the continuum, the first counter examples of Milnor to the Kac inverse
spectral question for manifolds gave already Hodge isospectral examples. 
There is a lot to explore still for the inverse spectral problem. Can one read off the 
Betti vectors from the spectrum alone, if one does not know to which k-form sector the 
eigenvalues belong? Can one reconstruct the simplicial complex if one knows the eigenvalues
of all Barycentric refinements of the complex? Can one reconstruct the complex from the 
Hodge spectrum and the connection Laplacian spectrum?

\paragraph{}
The theorem is maybe a bit more surprising when looking at other Laplacians defined for 
a simplicial complex $G$.  The {\bf connection Laplacian} is defined as 
$H_{xy} = 1$ if $x \cap y$ is non-empty and $H_{xy}=0$ else.
In this case, we see no relations between eigenvalues of $K$ and $G$.
While most eigenvalues satisfy $\lambda_j(K) \leq \lambda_j(G)$ this is
not universally true. Also, while $H(G),H(K)$ are unimodular \cite{KnillEnergy2020}
this is not true for open sets: $H(U)$ is in general singular. In general, 
the spectrum of the Hodge Laplacian tends to be larger than the spectrum of the 
connection Laplacian but this is also not universally true. There is a relation between
the connection Laplacian and the Hodge Laplacian for one dimensional complexes
\cite{Hydrogen}: we called this the hydrogen identity $L= H -H^{-1}$. 

\paragraph{}
The {\bf complexity} ${\rm Det}(L(G))$ of a simplicial complex is defined as 
the pseudo determinant \cite{cauchybinet} $\prod_{k, \lambda_k \neq 0} \lambda_k(G)$ 
of the Hodge Laplacian. The Forest quantity ${\rm det}(L(G)+1)$ is also monotone
\cite{cauchybinet,TreeForest}. Also of interest can be the {\bf total energy} 
${\rm tr}(L)=\sum_j \lambda_j$,  
the sum of the eigenvalues of $L$. An immediately consequence of the Theorem~(\ref{1}) is:

\begin{coro}[Complexity is monotone]
If $K \subset G$, then ${\rm Det}(L(K)) \leq {\rm Det}(L(G))$, 
${\rm det}(L(K)+1) \leq {\rm det}(L(G)+1)$ and ${\rm tr}(L(K)) \leq {\rm tr}(L(G))$. 
\end{coro}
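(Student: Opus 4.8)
The plan is to read off all three inequalities from the left-padded spectral comparison of Theorem~\ref{1}, handling the three quantities separately because they react to the eigenvalue inequality in genuinely different ways. The trace and the forest quantity will follow at once; the pseudo-determinant will need a reduction to single simplex additions together with one extra combinatorial input.

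First I would dispose of the trace and the forest quantity. Writing both spectra as non-descending sequences of length $n$, the $n-m$ padded zeros of $K$ contribute $0$ to the trace and a factor $1+0=1$ to ${\rm det}(L+1)$, so that ${\rm tr}(L(K)) = \sum_{j=1}^n \lambda_j(K)$ and ${\rm det}(L(K)+1) = \prod_{j=1}^n (1+\lambda_j(K))$. Since Theorem~\ref{1} gives $\lambda_j(K)\le\lambda_j(G)$ for every $j$, each summand and each (positive) factor is monotone, and summing respectively multiplying yields ${\rm tr}(L(K))\le{\rm tr}(L(G))$ and ${\rm det}(L(K)+1)\le{\rm det}(L(G)+1)$.

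The pseudo-determinant ${\rm Det}(L)=\prod_{\lambda_j\neq 0}\lambda_j$ is the subtle case, and the one I expect to carry all of the difficulty. The naive attempt — discard the zero eigenvalues on both sides and multiply the surviving inequalities — fails, because $K$ and $G$ in general have different numbers of nonzero eigenvalues and the extra nonzero eigenvalues of $G$ can be smaller than $1$, so multiplying them in can decrease the product. To control this I would reduce to a single step $K'=K\cup\{x\}$ with $x$ locally maximal, iterating the Morse filtration from $K$ up to $G$ by adjoining the simplices of $G\setminus K$ in non-decreasing dimension, so that each is locally maximal at the moment it is added. At each step exactly one of the two alternatives of the filtration occurs. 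If the step increases $b_{{\rm dim}(x)}$, the rank of $L$ is unchanged, the nonzero eigenvalues keep the same top padded indices, and Theorem~\ref{1} applied to those indices gives ${\rm Det}(L(K'))\ge{\rm Det}(L(K))$ directly, all factors being positive.

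The remaining alternative, where the step fills a hole and decreases $b_{{\rm dim}(x)-1}$, is the main obstacle: here the rank of $L$ jumps by two, two new nonzero eigenvalues appear, and they may well be less than $1$, so the eigenvalue inequalities alone no longer suffice. For this case I would abandon pure spectral comparison and invoke the combinatorial positivity behind the matrix-forest theorem \cite{cauchybinet,TreeForest}: expand ${\rm Det}(L)=\sum_{|S|=r}{\rm det}(L_S)$ as a sum over $r\times r$ principal minors with $r={\rm rank}(L)$, each non-negative because $L$ is positive semi-definite, and expand each minor via Cauchy--Binet as a sum of squares of minors of the incidence matrix $d$. Adjoining the row and column of $x$ to $d$ only introduces additional non-negative terms into these sums, so ${\rm Det}(L(K'))\ge{\rm Det}(L(K))$ here as well. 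Chaining the single-step inequalities from $K$ to $G$ then gives ${\rm Det}(L(K))\le{\rm Det}(L(G))$, completing the three claims.
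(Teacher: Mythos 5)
Your treatment of ${\rm tr}(L)$ and ${\rm det}(L+1)$ is correct and is exactly the sense in which the paper treats the corollary as immediate from Theorem~\ref{1}: the padded zeros contribute nothing to the sum and factors of $1$ to the product, and the termwise inequality between non-negative eigenvalues passes to both. You are also right --- and more careful than the paper, which offers no argument beyond ``immediate consequence'' --- that ${\rm Det}(L)$ does \emph{not} follow from the same one-line reasoning, precisely because the rank can grow and the newly created non-zero eigenvalues of the larger complex may lie below $1$. Your reduction to single locally maximal additions and your handling of the case where a Betti number increases (rank preserved, so the non-zero eigenvalues occupy the same top padded indices) are both sound.

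The gap is in the other case, where the rank of $L$ jumps from $r$ to $r+2$. There you compare ${\rm Det}(L(K))=\sum_{|S|=r}\det(L(K)_S)$ with ${\rm Det}(L(K'))=\sum_{|S|=r+2}\det(L(K')_S)$: these are sums over principal minors of \emph{different orders}, so the terms of the first sum are not a sub-collection of the terms of the second, and ``adjoining a row and column only introduces additional non-negative terms'' is not a valid comparison. The purely linear-algebraic statement you are implicitly relying on is in fact false: if $A=(10\;\;0)$ and $A'$ is obtained by appending the row $(0\;\;1/100)$, the rank of $A^{T}A$ increases from $1$ to $2$ while ${\rm Det}(A^{T}A)=100$ drops to ${\rm Det}(A'^{T}A')=1/100$. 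So adding a row that raises the rank can decrease a pseudo-determinant in general, and any proof for the Hodge Laplacian must invoke the specific combinatorial structure of the incidence matrices $d_k$ (for instance a matrix-tree type count as in \cite{cauchybinet}, or an injection of non-vanishing $r\times r$ minors into non-vanishing $(r+2)\times(r+2)$ minors of no smaller absolute value), which your argument does not supply. Since the paper itself gives no proof of this case either, this is the one piece of the corollary that still requires a genuine argument.
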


\paragraph{}
We even see in all experiments so far that ${\rm Det}(L(G)) \geq {\rm Det}(L(K)) {\rm Det}(L(U)))$ 
and ${\rm tr}(L(G)) \geq {\rm tr}(L(K))  + {\rm tr}(L(U)))$ but this does not 
follow. It is intuitive if we think about the pseudo determinant of $L$
as a measure of {\bf complexity} and the trace as a {\bf total energy} of the complex. 
But these stronger inequalities are not proven. 

\paragraph{}
Here is some code which allow to compute the spectra of open or closed sets
$K$ and its complement $U$. We also add code to compute some basic data like the $f$-vector, 
the Betti vectors, the pseudo determinant, Euler characteristic, the trace or the analytic torsion. 
As the pseudo determinant ${\rm Det}(L_0)$ is the number of rooted spanning trees in 
the graph obtained as the {\bf skeleton complex} $(V,E)$ with $V = \bigcup_{|x|=1} x$
and $E= \bigcup_{|x|=2} x$, one can interpret ${\rm Det}(L)$ as a super count
of higher dimensional trees. This comes up in the context of {\bf analytic torsion} 
$A(G) = \prod_k {\rm Det}(L_k)^{k (-1)^{k+1}}$ \cite{KnillTorsion} which 
agrees with the {\bf super pseudo determinant }
$\prod_k {\rm Det}(D_k)^{(-1)^k}$ of the {\bf Dirac blocks} $D_k=d_k^* d_k$. 

\paragraph{}
For the Kirchhoff matrix $L_0$ we have proven in \cite{Eigenvaluebounds} that
$\lambda_k \leq a_k + a_{k-1}$, where
$a_k$ are the diagonal entries of $L_0$, the vertex degrees and $a_{0}=0$. 
(We use letters $a_k$ here because $d_k, b_k$ have been used already for the 
exterior derivatives or Betti numbers).
This upper bound is not always true in the Hodge case. 
In order to get bounds, one has first to get something like
the Anderson-Morley bound \cite{AndersonMorley1985} for the spectral radius. 
We currently think that $\lambda_k \leq 2a_k + a_{k-1}+a_{k-2}$ will do 
because in the $0$-dimensional case we had only to deal with $L_0=d_0^* d_0$ while
for general $k$-forms we have $L_k = d_k^* d_k + d_{k-1} d_{k-1}^*$. 

\begin{tiny}
\lstset{language=Mathematica} \lstset{frameround=fttt}
\begin{lstlisting}[frame=single]
F[G_]:=Module[{l=Map[Length,G]},If[G=={},{},
 Table[Sum[If[l[[j]]==k,1,0],{j,Length[l]}],{k,Max[l]}]]]; s[x_]:=Signature[x];L=Length;
s[x_,y_]:=If[SubsetQ[x,y]&&(L[x]==L[y]+1),s[Prepend[y,Complement[x,y][[1]]]]*s[x],0];
Dirac[G_]:=Module[{f=F[G],b,d,n=Length[G]},b=Prepend[Table[Sum[f[[l]],{l,k}],{k,Length[f]}],0];
 d=Table[s[G[[i]],G[[j]]],{i,n},{j,n}]; {d+Transpose[d],b}];
Hodge[G_]:=Module[{Q,b,H},{Q,b}=Dirac[G];H=Q.Q;Table[Table[H[[b[[k]]+i,b[[k]]+j]],
 {i,b[[k+1]]-b[[k]]},{j,b[[k+1]]-b[[k]]}],{k,Length[b]-1}]];
Beltrami[G_] := Module[{B=Dirac[G][[1]]},B.B]; nu[A_]:=If[A=={},0,Length[NullSpace[A]]];  
Betti[G_]:=Map[nu,Hodge[G]];
Closure[A_]:=If[A=={},{},Delete[Union[Sort[Flatten[Map[Subsets,A],1]]],1]];Cl=Closure;
Whitney[s_]:=If[Length[EdgeList[s]]==0,Map[{#}&,VertexList[s]],
   Map[Sort,Sort[Cl[FindClique[s,Infinity,All]]]]];
OpenStar[G_,x_]:=Module[{U={}},Do[If[SubsetQ[G[[k]],x],U=Append[U,G[[k]]]],{k,Length[G]}];U];
Basis[G_]:=Table[OpenStar[G,G[[k]]],{k,Length[G]}]; Stars=Basis; 
RandomOpenSet[G_,k_]:=Module[{A=RandomChoice[Basis[G],k],U={}},Do[U=Union[U,A[[j]]],{j,k}];U];
Fvector[G_]:=If[Length[G]==0,{},Delete[BinCounts[Map[Length,G]],1]];
FirstNonZero[f_]:=-(-1)^ArrayRules[Chop[f]][[1,1,1]] ArrayRules[Chop[f]][[1,2]];
PseudoDet[A_]:=If[A=={},1,FirstNonZero[CoefficientList[CharacteristicPolynomial[A,x],x]]];
Fvector[G_,U_,K_]:={"f_G=",Fvector[G],"f_U=",Fvector[U],"f_K=",Fvector[K]};
w[x_]:=-(-1)^Length[x];  dim[x_]:=Length[x]-1; EulerChi[A_]:=Total[Map[w,A]];
EulerChi[G_,U_,K_]:={"X(G)= ",EulerChi[G],"X(K)= ",EulerChi[K],"X(U)= ",EulerChi[U]}; 
Betti[G_,U_,K_]:={"b_G=",Betti[G],"b_U=",Betti[U],"b_K=",Betti[K]};
Torsion[G_]:=Module[{u=Map[PseudoDet,Hodge[G]],k},
  Product[If[EvenQ[k],u[[k]]^(k-1),1/u[[k]]^(k-1)],{k,Length[u]}]];
ConnectionLaplacian[G_]:=Module[{n=Length[G],A=Table[1,{n},{n}]},
  Do[If[DisjointQ[G[[k]],G[[l]]],A[[k,l]]=0],{k,n},{l,n}];A];

G=Whitney[RandomGraph[{20,50}]]; U=RandomOpenSet[G,10];K=Complement[G,U];
KK=Beltrami[K];GG=Beltrami[G]; UU=Beltrami[U]; ev[L_]:=Sort[Eigenvalues[1.0*L]]; 
Print[Betti[G,U,K]]; Print[Fvector[G,U,K]];  Print[EulerChi[G,U,K]];
Print[{"tr(G)= ",Tr[GG]," tr(K)= ",Tr[KK]," tr(U)= ",Tr[UU]}];
Print[{"Det(G)= ",PseudoDet[GG]," Det(K)= ",PseudoDet[KK]," Det(U)= ",PseudoDet[UU]}];
Print[N[PseudoDet[GG]/(PseudoDet[KK]*PseudoDet[UU])]]
Print[{"T(G)= ",Torsion[G]," T(K)= ",Torsion[K]," T(U)= ",Torsion[U]}];

Print["The difference of the spectra is non-negative by Theorem 1   "]; 
{EG,EK,EU}=PadLeft[Map[ev,{GG,KK,UU}]]; ListPlot[{EG-EK,EG-EU},Joined->True]

Print["This holds for each k form by the Corollary    "]; 
m=Length[Hodge[K]];Table[KK = Hodge[K][[k]]; GG = Hodge[G][[k]]; UU = Hodge[U][[k]];
{EG,EK,EU}=PadLeft[Map[ev,{GG,KK,UU}]]; ListPlot[{EG-EK,EG-EU},Joined->True],{k,m}]

Print["The energy inequality for L_{K,U}=L_K + L_U and L_G does not always hold   "]; 
ListPlot[{ev[GG],Sort[Join[ev[KK],ev[UU]]]},Joined->True]

\end{lstlisting}
\end{tiny}

\begin{figure}[!htpb]
\scalebox{1.0}{\includegraphics{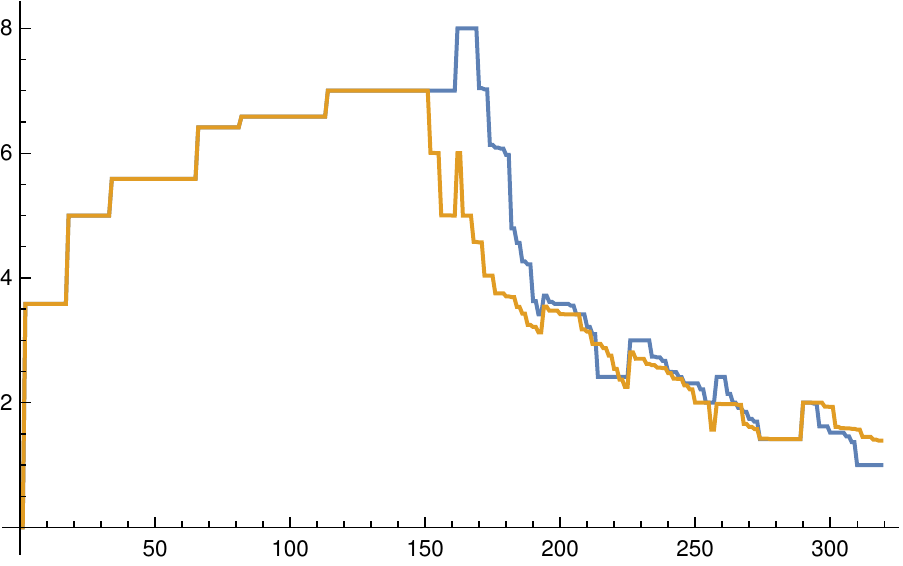}}
\caption{
The difference of the spectra according to the theorem. We see
both the difference $\sigma(L_G)-\sigma(L_K)$ (orange) and 
$\sigma(L_G)-\sigma(L_U)$ (blue) 
}
\end{figure}

\begin{figure}[!htpb]
\scalebox{1.0}{\includegraphics{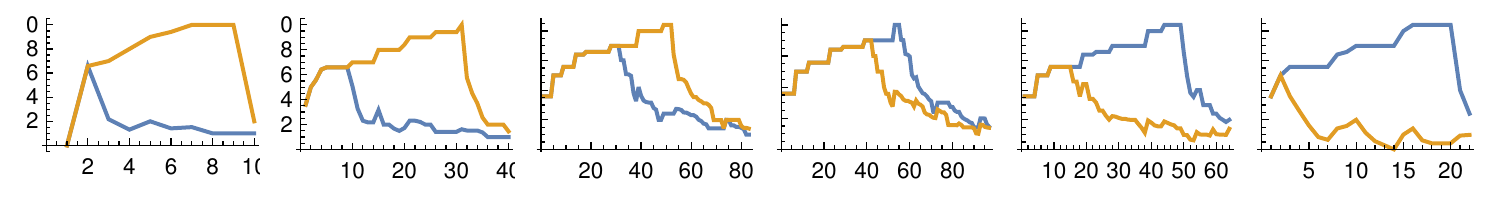}}
\caption{
The difference of the form Laplacians is shown as an 
illustration of the corollary.
In some sectors, we see a dominance of $\sigma(L_G)-\sigma(L_U)$
in others we see that $\sigma(L_G)-\sigma(L_K)$ dominates.
}
\end{figure}

\bibliographystyle{plain}

\begin{thebibliography}{10}

\bibitem{AndersonMorley1985}
W.N. Anderson and T.D. Morley.
\newblock Eigenvalues of the {L}aplacian of a graph.
\newblock {\em Linear and Multilinear Algebra}, 18(2):141--145, 1985.

\bibitem{HornJohnson2012}
R.A. Horn and C.R. Johnson.
\newblock {\em Matrix Analysis}.
\newblock Cambridge University Press, second edition edition, 2012.

\bibitem{knillmckeansinger}
O.~Knill.
\newblock {The McKean-Singer Formula in Graph Theory}.
\newblock {\\}http://arxiv.org/abs/1301.1408, 2012.

\bibitem{cauchybinet}
O.~Knill.
\newblock Cauchy-{B}inet for pseudo-determinants.
\newblock {\em Linear Algebra Appl.}, 459:522--547, 2014.

\bibitem{Hydrogen}
O.~Knill.
\newblock The {H}ydrogen identity for {L}aplacians.
\newblock {\\}https://arxiv.org/abs/1803.01464, 2018.

\bibitem{KnillEnergy2020}
O.~Knill.
\newblock The energy of a simplicial complex.
\newblock {\em Linear Algebra and its Applications}, 600:96--129, 2020.

\bibitem{KnillTorsion}
O.~Knill.
\newblock Analytic torsion for graphs.
\newblock https://arxiv.org/abs/2201.09412, 2022.

\bibitem{Eigenvaluebounds}
O.~Knill.
\newblock Eigenvalue bounds of the {Kirchhoff Laplacian}.
\newblock https://arxiv.org/abs/2205.10968, 2022.

\bibitem{TreeForest}
O.~Knill.
\newblock The {Tree-Forest Ratio}.
\newblock https://arxiv.org/abs/2205.10999, 2022.

\bibitem{McKeanSinger}
H.P. McKean and I.M. Singer.
\newblock Curvature and the eigenvalues of the {L}aplacian.
\newblock {\em J. Differential Geometry}, 1(1):43--69, 1967.

\bibitem{Spielman2009}
D.A. Spielman.
\newblock Spectral graph theory.
\newblock lecture notes, 2009.

\end{thebibliography}

\end{document}